\newtheorem{theorem}{Theorem}
\newtheorem{lemma}[theorem]{Lemma}
\newtheorem{corollary}[theorem]{Corollary}
\def \RR {\mathbb R}
\def \EE {\mathbb E}
\def \ZZ {\mathbb Z}
\def \TT {\mathbb T}
\def \eps {\varepsilon}
\title{A Lipschitz Function on a Torus is Almost Constant on a Large Parallel Subsection}
\date{}
\newcommand\blfootnote[1]{%
  \begingroup
  \renewcommand\thefootnote{}\footnote{#1}%
  \addtocounter{footnote}{-1}%
  \endgroup
}
\begin{document}
\global\long\def\dist{\mbox{dist}}

\global\long\def\Osc{\mbox{Osc}}

\title{On the oscillation rigidity of a Lipschitz function on a high-dimensional
flat torus }

\author{Dmitry Faifman, Bo'az Klartag and Vitali Milman}

\maketitle

\abstract{Given an arbitrary $1$-Lipschitz function $f$ on the torus $\TT^n $, we find a $k$-dimensional subtorus $M \subseteq \TT^n$, parallel to the axes,
such that the restriction of $f$ to the subtorus $M$ is nearly a constant function. The $k$-dimensional subtorus $M$ is chosen randomly and uniformly. We show
that when $k \leq c \log n / (\log \log n + \log 1/\eps)$, the maximum and the minimum of $f$ on this  random subtorus $M$ differ
by at most $\eps$, with high probability.}

\blfootnote{School of Mathematical Sciences, Tel Aviv University, Tel Aviv 69978, Israel. \\ E-mails: {\tt dfaifman@gmail.com}, {\tt klartagb@tau.ac.il}, {\tt milman@tau.ac.il}}

\section{Introduction}

A uniformly continuous function $f$ on an $n$-dimensional space $X$ of finite volume tends to concentrate near a single value as $n$ approaches
infinity, in the
sense that the $\eps$-extension of some level set has nearly
full measure. This phenomenon, which is called the {\it concentration of measure in high dimension},
is frequently related to a transitive group
of symmetries acting on $X$.
The prototypical example is the case of a $1$-Lipschitz function
on the unit sphere $S^n$, see \cite{MS,Le, Gr2}.  \\

One of the most important consequences of the concentration of measure
is the emergence of {\it spectrum}, as was discovered in
the 1970-s by the third named author, see \cite{M1,M2,M3}.
The idea is that not only the distinguished level set has a large
$\eps$-extension in sense of measure, but actually one may find
structured subsets on which the function is nearly constant.
When we have a group $G$ acting transitively on $X$,
this structured subset  belongs to the orbit $\{ g M_0 \, ; g \in G \}$
where $M_0 \subset X$ is a fixed subspace. The third named author
noted also some connections with Ramsey theory, which were developed
in two different directions: by Gromov in \cite{Gr} in the direction
of metric geomery, and by Pestov \cite{P1,P2} in the unexpected direction
of dynamical systems. \\

The phenomenon of spectrum thus follows from concentration, and
it is no surprise that most of the results in Analysis establishing spectrum
appeared as a consequence of concentration.
 In this note, we demonstrate an instance where no concentration of measure is available, but
nevertheless a geometrically structured level set arises. \\

To state our result, consider the standard flat torus $\TT^n = \RR^n /  \ZZ^n = (\RR / \ZZ)^n$,
which inherits its Riemannian structure from $\RR^n$.
 We say that $M\subset\mathbb{T}^{n}$ is a
{\it coordinate subtorus of dimension $k$} if it is the collection of all $n$-tuples $(\theta_{j})_{j=1}^{n}\in\mathbb{T}^{n}$
with fixed $n-k$ coordinates. Given a manifold $X$ and $f: X \rightarrow \RR$
we denote the oscillation of $f$ along $X$ by
\[
\Osc(f;X)=\sup_{X}f-\inf_{X}f.
\]

\begin{theorem}
There is a universal constant $c>0$, such that for any $n \geq 1, 0 < \eps \leq 1$
and a function  $f:\mathbb{T}^{n}\to\mathbb{R}$
which is $1$-Lipschitz,  there exists a $k$-dimensional
coordinate subtorus $M \subset\mathbb{T}^{n}$ with $k = \left \lfloor c\frac{\log n}{\log\log (3n)+\log |\eps|} \right \rfloor$,
such that $\Osc(f;M) \leq \eps$. \label{thm_1219}
\end{theorem}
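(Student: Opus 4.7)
I would attempt a probabilistic proof, choosing the set $I \subset [n]$ of $k$ free axes deterministically based on the partial derivatives of $f$, and then sampling the fixed coordinate values $y \in \TT^{[n] \setminus I}$ uniformly at random, together with a uniform random shift of an auxiliary net. The first step is to reduce the oscillation of $f$ on the subtorus $M = \TT^I \times \{y\}$ to axis-parallel line oscillations: using the triangle inequality along a path that changes one coordinate at a time,
\[
\Osc(f; M) \leq \sum_{i \in I} \sup_{z \in \TT^{I \setminus \{i\}}} g_i(z, y), \qquad g_i(w) := \Osc_{x_i \in \TT} f(x_i, w),
\]
where each $g_i$ is a $2$-Lipschitz function on $\TT^{n-1}$. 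Hence it suffices to ensure each of the $k$ supremum terms is at most $\eps / k$.

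The 1-Lipschitz assumption implies $\sum_i |\partial_i f|^2 \leq 1$ almost everywhere, which by Cauchy--Schwarz yields $\sum_i \|\partial_i f\|_{L^1(\TT^n)} \leq \sqrt{n}$. Taking $I$ to be the $k$ indices of smallest $\|\partial_i f\|_{L^1}$, and using the pointwise inequality $g_i(w) \leq \int_{\TT} |\partial_i f(x_i, w)|\, dx_i$, each $g_i$ with $i \in I$ satisfies $\|g_i\|_{L^1(\TT^{n-1})} = O(1/\sqrt{n})$.

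The crucial step is upgrading this $L^1$-control to a uniform bound on the random $(k-1)$-dimensional subtorus $\TT^{I \setminus \{i\}} \times \{y\}$. I would fix a $\delta$-grid $\mathcal{N} \subset \TT^{k-1}$ of size $\lceil 1/\delta \rceil^{k-1}$ and introduce an independent uniform shift $u \in \TT^{k-1}$. Under this shift, each grid point $(p + u, y)$ with $p \in \mathcal{N}$ is uniformly distributed on $\TT^{n-1}$, so Markov's inequality at each grid point yields $\Pr[g_i(p + u, y) > t] \leq \|g_i\|_{L^1}/t$. A union bound over the $\lceil 1/\delta\rceil^{k-1}$ grid points and the $k$ axes, together with the Lipschitz discretization error of order $\delta \sqrt{k-1}$, bounds the failure probability by approximately $k^{O(1)} (k/\eps)^k / \sqrt{n}$ after choosing $t \sim \eps/k$ and $\delta \sim \eps / k^{3/2}$.

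For this failure probability to be strictly less than $1$ it suffices that $k \log(k/\eps) \lesssim \log n$, which rearranges precisely to the desired bound $k \leq c \log n / (\log \log n + \log(1/\eps))$. The main obstacle I foresee is the passage from the $L^1$-estimate on $g_i$ to a uniform bound on a random lower-dimensional subtorus: a fixed grid point need not have a marginal expectation comparable to $\|g_i\|_{L^1}$, and it is the random shift that forces this comparability. Carefully tracking the $\sqrt{k}$-type factors arising from the Euclidean geometry on $\TT^{k-1}$ is also needed to recover the exact parameter dependence claimed.
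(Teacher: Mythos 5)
Your approach is correct in outline and is genuinely different from the one in the paper, so it is worth contrasting the two. Both exploit the same fact---that on average most coordinate partial derivatives of a $1$-Lipschitz function are of size $O(1/\sqrt{n})$---but the way you pass from an integral bound on the derivatives to a pointwise oscillation bound on the subtorus diverges. The paper works throughout with an $L^p$ norm at the supercritical Morrey exponent $p=(1+\alpha)k>k$: it shows (Lemma \ref{lem_subset}) that a \emph{random} set $J$ of $k$ coordinate directions makes $\mathbb{E}\,|P_E \nabla f|^p$ small, hence (Corollary \ref{lem:choiceofM}) some coordinate subtorus $M$ has $\int_M|\nabla_M f|^p$ small, and then uses a quantitative Morrey/Sobolev embedding (Lemma \ref{lem_1547}) along a polygonal path of unit-length-$\tfrac12$ segments to turn that $L^p$ bound into $\Osc(f;M)\le\eps$. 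Your argument stays in $L^1$: you pick the $k$ coordinate directions \emph{deterministically} (the $k$ smallest $\|\partial_i f\|_{L^1}$), decompose $\Osc(f;M)$ into a sum of single-coordinate oscillations $g_i$, and control $\sup_z g_i(z,y)$ over a random fiber $y$ by a $\delta$-net on $\TT^{k-1}$, a random shift $u$ to make each shifted grid point uniformly distributed, Markov at each grid point, a union bound, and the $2$-Lipschitz continuity of $g_i$ to pay the discretization error $\sim\delta\sqrt{k-1}$. The tradeoffs: your route avoids Morrey's inequality and the auxiliary exponent $\alpha$ entirely, at the cost of the net/union-bound machinery; the paper's route avoids the net and gives a slightly cleaner ``random subtorus works with probability $9/10$'' statement, and the Morrey lemma packages the dimensional constants for free. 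Both arrive at essentially the same condition $k\bigl(\log k+\log(1/\eps)\bigr)\lesssim\log n$, hence the same $k\le c\log n/(\log\log n+\log(1/\eps))$.

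Two points you should make explicit when writing this up. First, the chain
\[
g_i(w)\ \le\ \int_{\TT}|\partial_i f(x_i,w)|\,dx_i,
\qquad
\|g_i\|_{L^1(\TT^{n-1})}\ \le\ \|\partial_i f\|_{L^1(\TT^n)}\ \le\ \frac{\sqrt n}{\,n-k+1\,}\ \le\ \frac{2}{\sqrt n}
\]
for $i\in I$ needs $k\le n/2$, which your choice of $k$ guarantees (it is built into the paper's Lemma \ref{lem_1256} as well). Second, after the union bound you must first fix a single pair $(u_0,y_0)$ in the positive-probability event, and only then read off the deterministic conclusion $\Osc\bigl(f;\TT^I\times\{y_0\}\bigr)\le\eps$; the shift $u_0$ is purely auxiliary and does not enter the definition of $M$. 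As you note, the remaining work is bookkeeping of the $k^{3/2}$-type factors (net covering radius $\tfrac{\delta}{2}\sqrt{k-1}$, the $k$ axes, and the target $\eps/k$ per axis), none of which changes the form of the final bound. One small alternative worth noting: instead of a net with a random shift, you could Fubini to bound $\mathbb{E}_y\int_{\TT^{k-1}}g_i(z,y)\,dz=\|g_i\|_{L^1}$ and then use that a nonnegative $2$-Lipschitz function $h$ on $\TT^{k-1}$ with $\int h\le A$ has $\sup h\lesssim \sqrt{k}\,A^{1/k}$; this removes the net and the shift and gives the same condition on $k$.
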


Note that the collection of all coordinate subtori equals the orbit  $\{ g M_0 \, ; \, g \in G \}$
where $M_0 \subset \TT^n$ is any fixed $k$-dimensional coordinate subtorus, and the group $G = \RR^n \rtimes S_n$
acts on $\TT^n$ by translations and permutations of the coordinates. Theorem \ref{thm_1219} is a manifestation
of {\it spectrum}, yet its proof below is inspired by proofs of the Morrey embedding theorem,
and the argument does not follow the usual concentration paradigm. We think that the spectrum phenomenon should
be much more widespread, perhaps even more than the concentration phenomenon, and we hope that this note will be a small step towards
its recognition. \\

\subsubsection*{Acknowledgements}

We would like to thank Vladimir Pestov for his interest in this work.
The second-named author was supported by a grant from the European Research Council (ERC).

\section{Proof of the theorem}
  We write $| \cdot |$ for the standard Euclidean norm in $\RR^n$
  and we write $\log$ for the natural logarithm.
The standard vector fields $\partial/\partial x_{1},\ldots,
\partial/\partial x_n$ on $\RR^n$ are well defined also on the quotient $\TT^n = \RR^n / \ZZ^n$.
These $n$ vector fields are the ``coordinate directions'' on the unit torus $\TT^n$. Thus,
the partial derivatives $\partial_1 f, \ldots, \partial_n f$ are well-defined for any smooth
function $f: \TT^n \rightarrow \RR$, and we have $|\nabla f|^2 = \sum_{i=1}^n (\partial_i f)^2$.
A $k$-dimensional subspace $E\subset T_{x}\mathbb{T}^{n}$ is a {\it coordinate subspace}
if it is spanned by $k$ coordinate directions.  For $f: \TT^n \rightarrow \RR$ and $M\subset\mathbb{T}^{n}$
a submanifold, we write $\nabla_{M}f$ for the gradient of the restriction
$f|_{M}:M\to\mathbb{R}$.  \\

Throughout the proof,
$c,C$ will always denote universal constants, not necessarily the
same at each appearance.
Since the Riemannian volume of $\TT^n$ equals one, Theorem \ref{thm_1219} follows from the
case $\alpha = 1$ of the following:

\begin{theorem} Let $n \geq 1, 0 < \eps \leq 1, 0 < \alpha \leq 1$ and $1 \leq k \leq c \frac{\log n}{\log\log (5n)+|\log \eps| + |\log \alpha|}$.
Let $f:\mathbb{T}^{n}\to\mathbb{R}$ be a locally-Lipschitz function such that, for $p = k(1 + \alpha)$,
\begin{equation}  \int_{\TT^n} |\nabla f|^p  \leq 1. \label{eq_1714} \end{equation}
Then there exists a $k$-dimensional
coordinate subtorus $M \subset\mathbb{T}^{n}$ with $\Osc(f;M) \leq \eps$. \label{thm_138}
\end{theorem}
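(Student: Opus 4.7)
My plan is to randomize the choice of coordinate subtorus $M$: sample the index set $S \subset \{1,\ldots,n\}$ of size $k$ uniformly from $\binom{[n]}{k}$, and sample the fixed values $\theta \in \TT^{n-k}$ of the coordinates outside $S$ uniformly as well. The argument has two main parts. First, a first moment estimate will show that with positive probability, $\|\nabla_M f\|_{L^p(M)}$ is small. Second, the Morrey--Sobolev embedding $W^{1,p}(\TT^k) \hookrightarrow C(\TT^k)$ --- which is available because $p = k(1+\alpha) > k$ --- will convert this smallness into a uniform oscillation bound.

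The key inequality to establish is
$$\EE_{S,\theta}\int_M |\nabla_M f|^p \;\leq\; \frac{k}{n}\int_{\TT^n} |\nabla f|^p \;\leq\; \frac{k}{n}.$$
To prove it, set $g_i(x) = (\partial_i f(x))^2$, so $|\nabla_M f|^2 = \sum_{i \in S} g_i$ and $|\nabla f|^2 = \sum_{i=1}^n g_i$. Assuming $k \geq 2$ (hence $p \geq 2$; the case $k=1$ is a one-line Hölder argument), I use the pointwise bound
$$\Bigl(\sum_{i\in S} g_i\Bigr)^{p/2} = \Bigl(\sum_{i\in S} g_i\Bigr)^{p/2-1}\cdot\sum_{i\in S} g_i \;\leq\; |\nabla f|^{p-2}\cdot\sum_{i\in S} g_i,$$
which holds because $\sum_{i \in S} g_i \leq \sum_i g_i = |\nabla f|^2$ and $p/2 - 1 \geq 0$. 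Taking $\EE_S$ extracts a factor of $k/n$, via $\EE_S \sum_{i\in S} g_i = (k/n) \sum_i g_i$; Fubini in $\theta$ then turns the integral over $M$ into an integral over $\TT^n$. By Markov's inequality, there is a concrete choice of $M$ with $\int_M |\nabla_M f|^p \leq 2k/n$.

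Next I invoke Morrey's inequality on $M \cong \TT^k$ with exponent $p > k$, obtaining $\Osc(f; M) \leq A(k,p) \cdot (2k/n)^{1/p}$ for an explicit Morrey constant $A(k, p)$. Tracking the standard proof (the Riesz-potential representation of $f - f_B$ on a ball combined with Hölder's inequality), and using that $p/(p-k) = (1+\alpha)/\alpha$ together with the diameter $\sqrt{k}/2$ of $\TT^k$, one gets a bound of the form $A(k,p) \leq C\sqrt{k}/\alpha$. Requiring $\Osc(f; M) \leq \eps$ and taking logarithms reduces, after using $p = k(1+\alpha) \leq 2k$, to a condition of the form
$$\log n \;\gtrsim\; k\bigl[\log k + \log(1/\alpha) + \log(1/\eps)\bigr].$$
Since the resulting $k$ is at most $\log n$ and therefore $\log k \leq \log \log n$, this follows from the stated hypothesis on $k$ for a suitable universal constant $c$.

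The main obstacle is the first moment estimate: the trivial pointwise bound $|\nabla_M f|^p \leq |\nabla f|^p$ only gives $\EE \int_M |\nabla_M f|^p \leq 1$, which is useless for controlling the oscillation. The crucial gain of a factor $k/n$ comes from the joint use of $p \geq 2$ and the fact that $\sum_{i \in S} g_i$ is on average much smaller than $|\nabla f|^2$, and this is where the large ambient dimension $n$ enters the argument. A secondary technical matter is controlling the Morrey constant $A(k, p)$; but since only its logarithm appears in the final dimension-counting step, it is enough that $A(k,p)$ grow polynomially in $k$ and like $1/\alpha$, which is indeed what the standard proof yields.
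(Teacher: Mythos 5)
Your proposal is correct, and it shares the paper's overall architecture --- average over all coordinate subtori, show the averaged $L^p$ norm of the restricted gradient is small, then convert to a uniform oscillation bound via Morrey's inequality with $p>k$ --- but your route to the first-moment estimate is genuinely different, and arguably cleaner. The paper's Lemma \ref{lem_subset} works pointwise in a coordinate-splitting fashion: introduce a threshold $\delta$, observe that $v=\nabla f(x)$ has at most $\delta^{-2}$ coordinates exceeding $\delta$, show a random $k$-subset $J$ misses them all with probability $1-O(k/(\delta^2 n))$, and then split $\EE|P_Ev|^p$ accordingly; this requires the auxiliary parameter $\delta$ of (\ref{eq_1019}) and the preliminary calibration Lemma \ref{lem_1256}. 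Your one-line bound $\bigl(\sum_{i\in S}g_i\bigr)^{p/2}\leq|\nabla f|^{p-2}\sum_{i\in S}g_i$, valid for $p\geq 2$, followed by $\EE_S\sum_{i\in S}g_i=(k/n)|\nabla f|^2$, gives $\EE_{S,\theta}\int_M|\nabla_Mf|^p\leq (k/n)\int_{\TT^n}|\nabla f|^p$ directly, with no threshold, no union bound over coordinates, and no analogue of Lemma \ref{lem_1256}; the price is that $k=1$ (where $p=1+\alpha<2$) must be handled separately, as you flag, e.g.\ by concavity of $t\mapsto t^{p/2}$ one gets the even better bound $n^{-p/2}$ there. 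In fact your bound $(k/n)^{1/p}$ is slightly tighter than the paper's $2\sqrt{k}\,\delta=\tfrac{\alpha}{8(1+\alpha)}\tfrac{\eps}{k}$ under the same hypothesis on $k$, and the comparison with $\eps$ then happens at the very end rather than being built into the choice of $\delta$. On the Morrey side you are a bit terse: the injectivity radius of $\TT^k$ is only $1/2$ while its diameter is $\sqrt{k}/2$, so one cannot invoke Morrey on a single ball covering the torus --- the paper chains $O(\sqrt{k})$ intervals of length $1/2$, which inflates the constant to order $k/\alpha$ rather than your stated $\sqrt{k}/\alpha$. But as you correctly observe, only $\log A(k,p)$ enters the dimension count, and $\log(k/\alpha)$ versus $\log(\sqrt{k}/\alpha)$ changes nothing but the universal constant $c$, so the final bound on $k$ matches the theorem.
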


The plan of the proof is as follows. First, for some large $k$ we
find a $k$-dimensional coordinate subtorus $M$ where the derivative
is small on average, in the sense that $\Big(\int_{M}|\nabla_{M}f|^{p}\Big)^{1/p}$
is small. The existence of such a subtorus is
a consequence of the observation that
at every point most of the partial derivatives in the coordinate directions
are small. We then restrict our attention to this subtorus, and take
any two points $\tilde{x},\tilde{y}\in M$. Our goal is to show
that $f(\tilde{x}) - f(\tilde{y}) < \eps$. \\

To this end we construct a polygonal line from $\tilde{x}$ to $\tilde{y}$
which consists of intervals of length $1/2$. For every such interval $[x,y]$
we randomly select a point $Z$ in a $(k-1)$-dimensional  ball which is orthogonal
to the interval $[x,y]$ and is centered at its midpoint. We then show that
$|f(x) - f(Z)|$ and $|f(y) - f(Z)|$ are typically small, since $|\nabla_M f|$ is
small on average along the intervals $[x,Z]$ and $[y,Z]$. \\

We proceed with a formal proof of Theorem \ref{thm_138}, beginning with the following computation:

\begin{lemma}
For any $n \geq 1, 0 < \eps  \leq 1, 0 < \alpha \leq 1$ and $1 \leq k \leq c \frac{\log n}{\log\log (5n)+|\log \eps| + |\log \alpha|}$,
we have that  $k \leq n/2$ and
\begin{equation}
\Bigg(\frac{2 k}{\delta^{2}n}\Bigg)^{1/p} \leq \sqrt{k} \cdot \delta
 \label{eq_1236} \end{equation}
 where $p = (1 + \alpha) k$ and
 \begin{equation}  \delta = \frac{\alpha}{16(1+\alpha)} \cdot \frac{\eps}{k^{3/2}}.
 \label{eq_1019} \end{equation}
\label{lem_1256}
\end{lemma}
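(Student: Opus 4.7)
The lemma is a direct numerical verification, so the plan is to take logarithms of the main inequality and check that it reduces precisely to the hypothesized upper bound on $k$. The auxiliary claim $k \leq n/2$ is immediate: the function $\log n / \log \log(5n)$ is $o(n)$, so for $c$ small enough the hypothesis $k \leq c \log n / \log\log(5n)$ forces $k \leq n/2$ (and for very small $n$ the constraint $k \geq 1$ together with $k \leq c\log n/\log\log(5n)$ is simply vacuous).

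For \eqref{eq_1236}, raising both sides to the $p$-th power and multiplying through by $\delta^2 n$ rewrites the inequality as
\[
2k \;\leq\; n \cdot \delta^{p+2} \cdot k^{p/2},
\]
which after taking logarithms is the equivalent requirement
\[
\log n \;\geq\; \log 2 \,+\, \log k \,-\, \tfrac{p}{2}\log k \,+\, (p+2)\,|\log \delta|.
\]
Plugging in formula \eqref{eq_1019} for $\delta$ and using $\alpha \leq 1$ yields
\[
|\log \delta| \;\leq\; |\log \alpha| + |\log \eps| + \tfrac{3}{2}\log k + \log 32.
\]
Collecting the $\log k$ terms, the coefficient of $\log k$ on the right-hand side is $1 - \tfrac{p}{2} + \tfrac{3}{2}(p+2) = p+4$. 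Using $p = (1+\alpha)k \leq 2k$, this bounds the right-hand side by
\[
C \cdot k \cdot \bigl[\,\log k + |\log \alpha| + |\log \eps| + 1\,\bigr]
\]
for a universal constant $C$.

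To finish, the hypothesis $k \leq c\log n / \log\log(5n)$ gives $\log k \leq \log\log(5n) + O(1)$, so the bracketed quantity is at most $C'\bigl[\log\log(5n) + |\log\alpha| + |\log\eps|\bigr]$. Multiplying by $k$ and invoking the full hypothesized bound on $k$ then yields an upper estimate of $C'' c \log n$; choosing the universal constant $c$ small enough that $C'' c \leq 1$ gives the required inequality $\log n \geq \text{RHS}$. The only obstacle is bookkeeping: one must verify that the exponent $3/2$ appearing in the definition of $\delta$ and the relation $p = (1+\alpha)k$ combine in such a way that the dominant right-hand side term is $O(k \log k)$, which is precisely what the $\log\log(5n)$ denominator in the hypothesis is designed to absorb.
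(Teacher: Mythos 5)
Your proposal is correct and follows essentially the same route as the paper: both proofs reduce (\ref{eq_1236}) to a logarithmic inequality of the form $\log n \geq (\text{const})\cdot k\,[\log k + |\log\alpha| + |\log\eps| + 1]$ and then invoke the hypothesized upper bound on $k$ (the paper packages the algebra as $(32k/\alpha)^{12k}\leq n$ before taking logs, but this is the same computation). The only stylistic difference is that the paper pins down $c=1/200$ and tracks explicit constants throughout, whereas you leave them as $C, C', C''$ to be absorbed at the end; this is a legitimate way to prove an existence statement for a universal constant.
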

\begin{proof} Take $c = 1/200$. The desired conclusion (\ref{eq_1236})  is equivalent to
$4 k^{2-p} \leq \delta^{2p+4} n^2$, which in turn is equivalent to
\begin{equation}
2^{8p + 18} \cdot \left( \frac{\alpha + 1}{\alpha} \right)^{2p+4} \cdot  k^{2p + 8} \leq \eps^{2p+4} n^2. \label{eq_1119} \end{equation}
Since $c\leq 1/12$ we have that $6 p \leq 12 k \leq \log n / |\log \eps|$ and hence
$\eps^{2p+4} n^2 \geq \eps^{6p} n^2 \geq n$. Since $\alpha + 1 \leq 2$ then in order to obtain (\ref{eq_1119}) it suffices to prove
\begin{equation}
 \left( \frac{32}{\alpha} \cdot k \right)^{2p+8} \leq n. \label{eq_1242}
\end{equation}
Since $c \leq 1/24$ and $k \leq c \log n / (\log \log (5n))$ then
$24 k \log k \leq \log n$. Since
$k \leq c \frac{\log n}{|\log \alpha| + \log(\log 5)}$
then $24 k \log \left( \frac{32}{\alpha} \right) \leq \log n$.
We conclude that $ 12 k \log \left( \frac{32}{\alpha} \cdot k \right) \leq \log n$, and hence
\begin{equation}    \left( \frac{32}{\alpha} \cdot k \right)^{12 k} \leq n. \label{eq_1145} \end{equation}
However, $p = (1 + \alpha) k$ and hence $2p + 8 \leq 12 k$. Therefore the desired bound (\ref{eq_1242}) follows from (\ref{eq_1145}).
Since $k \leq \frac{1}{2} \log n \leq n/2$, the lemma is proven.
\end{proof}

Our standing assumptions for the remainder of the proof of Theorem \ref{thm_138} are that $n \geq 1, 0 < \eps \leq 1, 0 < \alpha \leq 1$ and
\begin{equation}  1 \leq k \leq c \frac{\log n}{\log\log (5n)+|\log \eps| + |\log \alpha|}  \label{eq_1300} \end{equation}
where $c > 0$ is the constant from Lemma \ref{lem_1256}. We also denote
\begin{equation} p = (1 + \alpha) k
\label{eq_2153} \end{equation}
 and
we write $e_1,\ldots,e_n$ for the standard $n$ unit vectors in $\RR^n$.

\begin{lemma} Let $v \in \RR^n$ and let $J \subset \{ 1, \ldots, n \}$ be a random subset of size $k$, chosen uniformly
from the collection of all $\left( \! \! \begin{array}{c} n \\ k \end{array} \! \! \right)$ subsets.
Consider the $k$-dimensional subspace $E \subset \RR^n$ spanned by $ \{ e_{j} ; j\in J\}$
and let $P_{E}$ be the orthogonal projection operator onto $E$ in $\RR^n$. Then,
\[
\big(\mathbb{E}|P_{E}v|^{p}\big)^{1/p}\leq
\frac{\alpha}{8(1+\alpha)} \cdot \frac{\eps}{k}
\cdot |v|.
\] \label{lem_subset}
\end{lemma}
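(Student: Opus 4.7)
The plan is to estimate $\EE|P_E v|^p$ by a truncation at the threshold $\delta$ defined in \eqref{eq_1019}, using Chebyshev's inequality for the tail and the elementary pointwise bound $|P_E v|\le|v|$ inside the tail. Lemma~\ref{lem_1256} will then play the role of a one-line input that reconciles the two pieces of the split with the target $\alpha\eps/(8(1+\alpha)k)$.

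By homogeneity I assume $|v|=1$. Since $|P_E v|^2 = \sum_{j\in J} v_j^2$ and each index $j$ lies in the uniformly chosen $J$ with probability $k/n$, linearity of expectation gives
\[
\EE|P_E v|^2 = \frac{k}{n},
\]
so Chebyshev's inequality yields $\PP(|P_E v|\ge \delta) \le k/(n\delta^2)$.

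Next I would split the $p$-th moment according to whether $|P_E v|$ crosses the level $\delta$. On the low-values event the integrand is at most $\delta^p$, while on the complementary event I use the trivial bound $|P_E v|\le|v|=1$. This yields
\[
\EE|P_E v|^p \le \delta^p + \PP(|P_E v|\ge \delta) \le \delta^p + \frac{k}{n\delta^2}.
\]
To finish, I would invoke Lemma~\ref{lem_1256} in the equivalent form $2k/(\delta^2 n)\le k^{p/2}\delta^p$, which immediately shows $k/(n\delta^2)\le \tfrac12 k^{p/2}\delta^p$; combined with $\delta^p\le k^{p/2}\delta^p$ (valid since $k\ge 1$), this gives $\EE|P_E v|^p\le \tfrac32\, k^{p/2}\delta^p$, and hence
\[
\bigl(\EE|P_E v|^p\bigr)^{1/p}\le 2\sqrt{k}\,\delta = \frac{\alpha\eps}{8(1+\alpha)k}.
\]

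There is no substantive obstacle: the only real content is to notice that the constant $\delta$ from \eqref{eq_1019} is exactly calibrated so that the two terms of the Chebyshev split balance against the target bound, and Lemma~\ref{lem_1256} is precisely this calibration. Once the threshold is chosen, the argument reduces to the second-moment identity, a single application of Chebyshev, and arithmetic bookkeeping.
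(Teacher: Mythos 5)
Your proof is correct, and it follows a genuinely different (and arguably cleaner) route than the paper. The paper thresholds \emph{coordinate-wise}: it sets $I=\{i:|v_i|\ge\delta\}$, bounds $\PP(I\cap J\neq\emptyset)\le 2k/(\delta^2 n)$ by a product estimate over the $k$ sequential choices that make up $J$, and observes that on the complementary event every selected coordinate is small, giving $|P_E v|^2\le k\delta^2$. You instead threshold the \emph{norm} $|P_E v|$ directly, control the tail by the exact second-moment identity $\EE|P_E v|^2=k/n$ together with Markov's inequality (for $|P_E v|^2$), and use the trivial bound $|P_E v|\le 1$ in the tail. Both arguments land on two terms that Lemma~\ref{lem_1256} was calibrated to absorb, and both finish with the same arithmetic. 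Your version is more economical: the second-moment computation replaces the combinatorial product bound, you avoid introducing the auxiliary index set $I$, and your good-event contribution $\delta^p$ is smaller than the paper's $(k\delta^2)^{p/2}$ (you then deliberately relax it to $k^{p/2}\delta^p$, so this slack is spent rather than exploited). One cosmetic remark: what you are applying is Markov's inequality to the nonnegative variable $|P_E v|^2$ rather than Chebyshev's inequality proper (no centering), but the step and the resulting bound $\PP(|P_E v|\ge\delta)\le k/(n\delta^2)$ are correct. The final numerics also check out: $(\tfrac32)^{1/p}\le\tfrac32<2$, so $\bigl(\EE|P_E v|^p\bigr)^{1/p}\le 2\sqrt{k}\,\delta=\frac{\alpha\eps}{8(1+\alpha)k}$ as claimed.
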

\begin{proof}
We may assume that $v = (v_1,\ldots,v_n) \in \RR^n$ satisfies $|v| = 1$.
Let $\delta > 0$ be defined as in (\ref{eq_1019}).
Denote $I=\{i ; |v_{i}|\geq \delta \}$.
Since $|v|=1$, we must have $|I|\leq 1 / \delta^2$. We claim that
\begin{equation}
\mathbb{P}(I\cap J=\emptyset) \geq 1-\frac{2 k}{\delta^{2}n}. \label{eq_1042}
\end{equation}
Indeed, if $\frac{2 k}{\delta^{2}n} \geq 1$ then (\ref{eq_1042}) is obvious. Otherwise, $|I| \leq \delta^{-2} \leq n/2 \leq n-k$ and
\[
\mathbb{P}(I\cap J=\emptyset)=\prod_{j=0}^{k-1}\frac{n-|I|-j}{n-j}
\geq \left( 1 - \frac{|I|}{n-k+1} \right)^k
\geq \left( 1-\frac{2}{\delta^{2}n} \right)^k
\geq 1-\frac{2 k }{\delta^{2}n}.
\]
Thus (\ref{eq_1042}) is proven. Consequently,
\[
\mathbb{E}|P_{E}v|^{p} = \EE \left( \sum_{j\in J}v_{j}^2 \right)^{p/2}
\leq\frac{2 k}{\delta^{2}n}+\mathbb{E} \left[ 1_{\{ I \cap J = \emptyset \}} \cdot \left(  \sum_{j\in J}v_{j}^2 \right)^{p/2}  \right]
\leq\frac{2 k}{\delta^{2}n}+\Bigg(k \cdot \delta^2 \Bigg)^{p/2},
\]
where $1_A$ equals one if the event $A$ holds true and it vanishes otherwise.
By using the inequality $(a+b)^{1/p}\leq a^{1/p}+b^{1/p}$
we obtain
\[
\big(\mathbb{E}|P_{E}v|^{p}\big)^{1/p}\leq \Bigg(\frac{2 k}{\delta^{2}n}\Bigg)^{1/p}+\sqrt{k} \cdot \delta \leq
2  \sqrt{k} \cdot \delta  = \frac{\alpha}{8(1+\alpha)} \cdot \frac{\eps}{k},
\]
where we used (\ref{eq_1019}) and Lemma \ref{lem_1256}.
\end{proof}

\begin{corollary}
Let $f:\mathbb{T}^{n}\to\mathbb{R}$ be a locally-Lipschitz function with $ \int_{\TT^n} |\nabla f|^p  \leq 1$.
Then there exists a $k$-dimensional coordinate
subtorus $M \subset \TT^n$
such that \begin{equation} \left(\int_{M}|\nabla_{M}f|^{p}\right)^{1/p}\leq
\frac{\alpha}{8(1+\alpha)} \cdot \frac{\eps}{k}.
\label{eq_1309} \end{equation}
\label{lem:choiceofM}
\end{corollary}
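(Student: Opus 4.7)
The plan is to apply Lemma \ref{lem_subset} pointwise at every $x \in \TT^n$ and then integrate, using Fubini to convert averages of coordinate subspace projections of $\nabla f$ into averages over parallel coordinate subtori.

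First I would fix a random subset $J \subset \{1,\ldots,n\}$ of size $k$, let $E = \mathrm{span}\{e_j : j \in J\}$, and observe that for any coordinate subtorus $M$ parallel to $E$ the restricted gradient satisfies $\nabla_M f(x) = P_E \nabla f(x)$ for every $x \in M$. Since the coordinate subtori parallel to $E$ foliate $\TT^n$ and are parametrized (with unit total mass) by the complementary $(n-k)$-dimensional torus of translations, Fubini gives
\begin{equation*}
\int_{\TT^{n-k}} \left( \int_M |\nabla_M f|^p \right) dt \;=\; \int_{\TT^n} |P_E \nabla f(x)|^p \, dx,
\end{equation*}
where $t$ ranges over translates of $M$ in the complementary directions.

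Next I would take expectation over the random choice of $J$ and swap it with the spatial integral on the right-hand side. By Lemma \ref{lem_subset}, applied to the deterministic vector $v = \nabla f(x)$ for each fixed $x$, we have
\begin{equation*}
\EE_J |P_E \nabla f(x)|^p \;\leq\; \left( \frac{\alpha}{8(1+\alpha)} \cdot \frac{\eps}{k} \right)^{\! p} |\nabla f(x)|^p.
\end{equation*}
Integrating over $x \in \TT^n$ and using the hypothesis $\int_{\TT^n} |\nabla f|^p \leq 1$, we obtain
\begin{equation*}
\EE_J \int_{\TT^{n-k}} \left( \int_M |\nabla_M f|^p \right) dt \;\leq\; \left( \frac{\alpha}{8(1+\alpha)} \cdot \frac{\eps}{k} \right)^{\! p}.
\end{equation*}

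Finally, since the inner double integral is an average (both the expectation over $J$ and the integral over $t$ have unit total mass), there must exist some specific $J$ and some specific translate $t$, hence some specific $k$-dimensional coordinate subtorus $M$, for which $\int_M |\nabla_M f|^p$ does not exceed this average. Taking the $p$-th root yields (\ref{eq_1309}). I do not expect any genuine obstacle here: the only subtlety is to make the Fubini identification between averaging a coordinate-subspace projection over $\TT^n$ and averaging the restricted gradient over the family of parallel coordinate subtori, which is immediate once one notices that $\nabla_M f = P_E \nabla f$ on $M$.
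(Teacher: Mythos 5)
Your proposal is correct and follows essentially the same argument as the paper: the paper's single displayed identity $\mathbb{E}_{M}\int_{M}|\nabla_{M}f|^{p}=\int_{\mathbb{T}^{n}}\mathbb{E}_{E}|P_{E}\nabla f|^{p}$ is precisely your Fubini step combined with the observation $\nabla_M f = P_E \nabla f$, with the expectation over $M$ there playing the role of your two-stage average over $J$ and over translates $t$. You simply unpack the averaging more explicitly; the key ingredients (pointwise application of Lemma \ref{lem_subset}, Fubini, and the existence of a below-average choice) are identical.
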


\begin{proof}
The set of all coordinate $k$-dimensional subtori admits a unique probability
measure, invariant under translations and coordinate permutations.
Let $M$ be a random coordinate $k$-subtorus, chosen with respect to the uniform
distribution. All the tangent spaces $T_{x}\mathbb{T}^{n}$ are canonically
identified with $\mathbb{R}^{n}$, and we let $E\subset\mathbb{R}^{n}$
denote a random, uniformly chosen $k$-dimensional coordinate subspace.
Then we may write
\[
\mathbb{E}_{M}\int_{M}|\nabla_{M}f|^{p}=\int_{\mathbb{T}^{n}}\mathbb{E}_{E}|P_{E}\nabla f|^{p}\leq A^{p}\int_{\mathbb{T}^{n}}|\nabla f|^{p}\leq
A^{p},
\]
where $A = \frac{\alpha}{8(1+\alpha)} \cdot \frac{\eps}{k}$
and we used Lemma \ref{lem_subset}.
It follows that there exists $M$ that satisfies (\ref{eq_1309}).
\end{proof}

The following lemma is essentially  Morrey's inequality (see \cite[Section 4.5]{EG}).

\begin{lemma} Consider the $k$-dimensional Euclidean ball $B(0,R) = \{ x \in \RR^k \, ; \, |x| \leq R \}$.
Let $f: B(0,R) \rightarrow \RR$ be a locally-Lipschitz function, and let $x,y \in B(0,R)$ satisfy $|x-y| = 2R$.
Recall that $p = (1 + \alpha) k$. Then,
\begin{equation}  |f(x) - f(y)| \leq  4 \frac{1+\alpha}{\alpha} \cdot k^{\frac{1}{2(1 + \alpha)}}
\cdot  R^{1 - \frac{k}{p}} \left( \int_{B(0,R)} |\nabla f(x)|^p dx \right)^{1/p}.
\label{eq_1420} \end{equation} \label{lem_1547}
\end{lemma}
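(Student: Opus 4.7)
The bound is a sharpened form of Morrey's embedding theorem, available because $p>k$, and I will prove it by the standard recipe: represent $f(x)-f(z)$ as a line integral of $\nabla f$, average over $z$, and apply H\"older's inequality. Without loss of generality $y = -x$ with $|x| = R$, so the segment $[x,y]$ is a diameter of $B(0,R)$ through the origin.

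For every $z \in B(0,R)$ the fundamental theorem of calculus yields
\[
|f(x)-f(z)| \leq |z-x|\int_0^1 |\nabla f(x + s(z-x))|\,ds.
\]
I will average this over $z \in B(0,R)$, swap the order of integration, and for each fixed $s \in (0,1]$ apply the linear change of variables $w = x + s(z-x)$, which maps $B(0,R)$ bijectively onto $B((1-s)x,\, sR) \subseteq B(0,R)$ with Jacobian $s^k$. The resulting expression is
\[
\frac{1}{|B(0,R)|}\int_0^1 s^{-k-1}\int_{B((1-s)x,\, sR)} |\nabla f(w)|\,|w-x|\,dw\,ds.
\]
The bound $|w-x|\le 2sR$ on the inner ball, followed by H\"older's inequality in $w$ with exponents $p$ and $p' = p/(p-1)$, introduces the factor $(v_k(sR)^k)^{1/p'}$, where $v_k$ denotes the volume of the unit ball in $\RR^k$. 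After collecting the $s$-exponents one is left with $\int_0^1 s^{-k/p}\,ds = p/(p-k)$, which converges precisely because $p>k$. The bookkeeping then gives
\[
\mathrm{Avg}_{z \in B(0,R)} |f(x)-f(z)| \leq 2R^{1-k/p}\, v_k^{-1/p}\cdot\frac{p}{p-k}\cdot\left(\int_{B(0,R)}|\nabla f|^p\right)^{1/p}.
\]
The triangle inequality $|f(x)-f(y)| \leq |f(x)-f(z)| + |f(z)-f(y)|$, averaged over $z$ and combined with the analogous bound at $y$, doubles this estimate and supplies the prefactor $4$.

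To land on the stated form, one identifies $p/(p-k) = (1+\alpha)/\alpha$ directly from $p = (1+\alpha)k$, and bounds $v_k^{-1/p} \leq k^{1/(2(1+\alpha))}$. The latter is a Stirling-type calculation: from $v_k \gtrsim (2\pi e/k)^{k/2}/\sqrt{k}$ one obtains $v_k^{-1/p} \lesssim (k/(2\pi e))^{k/(2p)}\cdot k^{1/(2p)}$, and because $k/(2p) = 1/(2(1+\alpha))$ while $k^{1/(2p)} = O(1)$ and $(2\pi e)^{-1/(2(1+\alpha))} < 1$, this stays below $k^{1/(2(1+\alpha))}$ uniformly in $k \geq 1$ and $0 < \alpha \leq 1$.

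The conceptual content is textbook Morrey, so the only real obstacle is arranging the averaging so that the numerical constants match. In particular, naively integrating in polar coordinates around the boundary point $x$ produces a wasteful factor $\omega_{k-1}/|B(0,R)| \sim 2^k$, whereas the linear change of variables above cleanly extracts $v_k^{-1/p}$, which Stirling identifies with the advertised $k^{1/(2(1+\alpha))}$ once one recognizes the exponent $1/(2(1+\alpha))$ as $k/(2p)$.
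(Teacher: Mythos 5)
Your proof is correct, and it takes a genuinely different route from the paper's. Both proofs reduce to $R=1$, $y=-x$, $|x|=1$ and then implement a Morrey-type argument, but the averaging geometry differs. You average over the \emph{full} $k$-dimensional ball $B(0,R)$, connecting $x$ to each $z\in B$ by a segment, performing the change of variables $w = x + s(z-x)$, and applying H\"older — the textbook Morrey computation from Evans--Gariepy. The paper instead averages over a random point $Z$ in the $(k-1)$-dimensional cross-sectional disk $B(0,1)\cap x^\perp$ through the origin, forms the random variable $(1-T)x + TZ$ with $T$ uniform on $[0,1]$, computes its density $\rho$ explicitly, and applies H\"older with that density. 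Your approach produces the factor $\frac{p}{p-k} \cdot v_k^{-1/p}$; the paper's produces $\left(\frac{p-1}{p-k}\right)^{1/q}\left(\Gamma(\tfrac{k+1}{2})/\pi^{(k-1)/2}\right)^{1/p}$. Both are dominated by $\frac{1+\alpha}{\alpha}\cdot k^{\frac{1}{2(1+\alpha)}}$, so the final constant matches. The one soft spot in your write-up is the Stirling step for $v_k^{-1/p}\le k^{1/(2(1+\alpha))}$, which uses $\gtrsim$, $\lesssim$ and ``$O(1)$'' where a clean inequality is needed; it is true, but you can avoid Stirling entirely by observing that the cube inscribed in $B(0,1)$ has side $2/\sqrt{k}$, hence $v_k \ge (2/\sqrt{k})^k \ge k^{-k/2}$, which gives $v_k^{-1/p} \le k^{k/(2p)} = k^{1/(2(1+\alpha))}$ immediately. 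The paper likewise bounds $\Gamma(\tfrac{k+1}{2}) \le k^{k/2}$ and discards the $\pi$ power; neither proof actually needs the full strength of Stirling. Overall your proof is a valid and more standard alternative; the paper's cross-sectional variant is a mild refinement whose chief advantage is a cleaner explicit density, at the cost of being less recognizable as textbook Morrey.
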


\begin{proof} We may reduce matters to the case $R=1$ by replacing $f(x)$ by $f(R x)$; note that the right-hand side of (\ref{eq_1420})
is invariant under such replacement.
Thus $x$ is a unit vector, and $y = -x$. Let $Z$ be a random point, distributed uniformly
in the $(k-1)$-dimensional unit ball
$$ B(0,1) \cap x^{\perp} = \{ v \in \RR^k \, ; \, |v| \leq 1, \, v \cdot x = 0 \}, $$
where $v \cdot x$ is the standard scalar product of $x,v \in \RR^k$. Let us write
\begin{align} \label{eq_1448}\EE |f(x) - f(Z)| & \leq \EE |x-Z| \int_0^1 \left|\nabla f( (1 - t) x + t Z )\right| dt
\\ & \leq 2 \EE |\nabla f((1 - T) x + T Z)|  = 2 \int_{B(0,1)} |\nabla f(z)| \rho(z) dz, \nonumber
\end{align}
where $T$ is a random variable uniformly distributed in $[0,1]$, independent of $Z$, and
where
$\rho$ is the probability density of the random variable $(1 - T) x + T Z$. Then,
$$ \rho((1-r)x + r z) = \frac{c_k}{r^{k-1}} $$
when $z \in B(0,1) \cap x^{\perp}, 0 < r < 1$. We may compute $c_k$ as follows:
$$ 1 = c_k \int_0^1 \frac{1}{r^{k-1}} V_{k-1}(r) dr = c_k V_{k-1}(1) = c_k \frac{\pi^{k-1}}{\Gamma \left( \frac{k+1}{2} \right)},
$$
where  $V_{k-1}(r)$ is the $(k-1)$-dimensional volume of $(k-1)$-dimensional Euclidean ball of radius $r$.
Denote $q = p / (p-1)$. Then,
$$ \int_{B(0,1)} \rho^q =  \int_0^1 \left( \frac{c_k}{r^{k-1}} \right)^q  V_{k-1}(r) dr
= \frac{c_k^q V_{k-1}(1)}{(k-1)(1-q) + 1} = \frac{p-1}{p-k} \left(  \frac{\Gamma \left( \frac{k+1}{2} \right)}{\pi^{k-1}} \right)^{q-1},
$$
and hence
\begin{align}  \label{eq_1502} \left( \int_{B(0,1)} \rho^{q} \right)^{1/q} & = \left( \frac{p-1}{p-k} \right)^{1/q} \left(  \frac{\Gamma \left( \frac{k+1}{2} \right)}{\pi^{k-1}} \right)^{1/p}
 \\ & \leq \left( \frac{1+\alpha}{\alpha} \right)^{1/q} \left(  \frac{k^{k/2}}{\pi^{k-1}} \right)^{1/p} \leq  \frac{1+\alpha}{\alpha} \cdot k^{\frac{1}{2(1 + \alpha)}}.  \nonumber \end{align}
Denote $C_{\alpha, k} = \frac{1+\alpha}{\alpha} \cdot k^{\frac{1}{2(1 + \alpha)}}$. From (\ref{eq_1448}), (\ref{eq_1502}) and the H\"older inequality,
\begin{equation}  \EE |f(x) - f(Z)| \leq 2 \left( \int_{B(0,1)} |\nabla f|^p \right)^{\frac{1}{p}} \left( \int_{B(0,1)} \rho^{q} \right)^{\frac{1}{q}}
\leq 2 C_{\alpha, k} \left( \int_{B(0,1)} |\nabla f|^p \right)^{\frac{1}{p}}. \label{eq_1732} \end{equation}
A bound similar to (\ref{eq_1732}) holds also for $\EE |f(y) - f(Z)|$, since $y = -x$. By the triangle inequality,
\begin{equation*} |f(x) - f(y)| \leq \EE |f(y) - f(Z)| + \EE |f(Z) - f(x)| \leq 4 C_{\alpha,k} \left( \int_{B(0,1)} |\nabla f|^p \right)^{1/p}. \tag*{\qedhere} \end{equation*}
\end{proof}

\begin{proof}[Proof of Theorem \ref{thm_138}]
According to Corollary  \ref{lem:choiceofM} we may pick
a coordinate subtorus $M=\mathbb{T}^{k}$
so that
\begin{equation}
\left(\int_{M}|\nabla_{M}f|^{p}\right)^{1/p}\leq
\frac{\alpha}{8(1+\alpha)} \cdot \frac{\eps}{k}
\label{eq_950} \end{equation}
Given any two points ${x},{y}\in M$, let us show that
\begin{equation}
|f({x}) - f({y})| \leq \eps.
\label{eq_1536}
\end{equation}
The distance between ${x}$ and ${y}$ is at most $\sqrt{k}/2$.
Let us construct a curve, in fact a polygonal line, starting at ${x}$ and ending
at ${y}$ which consists of at most $\sqrt{k} + 1$ intervals
of length $1/2$. For instance, we may take all but the last two intervals
to be intervals of length $1/2$ lying on the geodesic between ${x}$ to ${y}$.
The last two intervals need to connect two points whose distance is at most $1/2$, and this is easy to do
by drawing an isosceles triangle whose base is the segment between these two points. \\

Let $[x_j,x_{j+1}]$ be any of the intervals appearing in the polygonal line constructed above.
Let $B \subset \TT^k = M $ be a geodesic ball of radius $R = 1/4$ centered at the midpoint of $[x_j,x_{j+1}]$.
This geodesic ball on the torus is isometric to a Euclidean ball of radius $R = 1/4$ in $\RR^k$. Lemma
\ref{lem_1547} applies, and implies that
$$ |f(x_j) - f(x_{j+1})| \leq 4 \frac{1+\alpha}{\alpha} \cdot k^{\frac{1}{2(1 + \alpha)}} \left( \int_B |\nabla_M f|^p \right)^{\frac{1}{p}} \leq
4 \frac{1+\alpha}{\alpha} \cdot \sqrt{k} \left( \int_M |\nabla_M f|^p \right)^{\frac{1}{p}}. $$
Since the  number of intervals in the polygonal line are at most  $\sqrt{k} + 1 \leq 2 \sqrt{k}$, then
$$ |f(x) - f(y)| \leq \sum_j |f(x_j) - f(x_{j+1})| \leq 8  \frac{1+\alpha}{\alpha} \cdot k    \left( \int_M |\nabla_M f|^p \right)^{1/p}
\leq \eps, $$
where we used (\ref{eq_950}) in the last passage.
The points $x,y \in M$ were arbitrary, hence $\Osc(f ; M) \leq \eps$.
\end{proof}

{\bf Remarks.}
\begin{enumerate}
\item
It is evident from the proof of Theorem \ref{thm_138}
that the subtorus $M$ is chosen randomly and uniformly
over the collection of all $k$-dimensional coordinate subtori. It is easy
to  obtain that with probability at least $9/10$,
we have that $Osc(M ;f) \leq \eps$.
\item The assumption that $f$ is locally-Lipschitz in Theorem \ref{thm_138} is only used
to justify the use of the fundamental theorem of calculus in (\ref{eq_1448}).
It is possible to significantly weaken this assumption; It suffices to know
that $f$ admits weak derivatives $\partial_1 f, \ldots, \partial_n f$ and that (\ref{eq_1714}) holds true,
see \cite[Chapter 4]{EG} for more information.
 \\

It is a bit surprising that the conclusion of the theorem holds also for non-continuous, unbounded functions,
with many singular points, as long as (\ref{eq_1714}) is satisfied in the sense of weak derivatives.
The singularities are necessarily of a rather mild type,
and a variant of our proof yields a subtorus $M$ on which the function $f$ is necessarily continuous with $\Osc(f; M) \leq \eps$.

\item
Another possible approach to the problem would be along the lines
of the proof of the classical concentration theorems - namely, finding
an $\eps$-net of points in a subtorus, where all the coordinate
partial derivatives of the function are small. However, this approach
requires some additional a-priori data about the function, such as
a uniform bound on the Hessian.

\item We do not know whether the dependence on the dimension in Theorem
\ref{thm_1219} is optimal. Better estimates may be obtained if the subtorus $M \subset \TT^n$
is allowed to be an arbitrary $k$-dimensional rational subtorus, which is not necessarily a coordinate subtorus.

\end{enumerate}

\end{document}